\newcommand{\supp}[1]{\operatorname{supp}(#1)}
\newcommand{\epsi}{\varepsilon}
\newcommand{\norm}[1]{\lVert#1\rVert}
\newcommand{\numbersystem}[1]{\mathbb{#1}}
\newcommand{\N}{\numbersystem{N}}
\newcommand{\abs}[1]{\lvert#1\rvert}
\theoremstyle{plain}
\newtheorem*{theorem}{Theorem}
\newtheorem{lemma}{Lemma}
\newtheorem{proposition}[lemma]{Proposition}
\theoremstyle{definition}
\newtheorem*{definition}{Definition}
\begin{document}

\bibliographystyle{amsplain}

\title{Simultaneous packing and covering in sequence spaces}
\author{Konrad J. Swanepoel} 
\address{Fakult\"at f\"ur Mathematik,
	Technische Universit\"at Chemnitz,  
	D-09107 Chemnitz, Germany}
\email{konrad.swanepoel@gmail.com}
\dedicatory{Dedicated to the memory of Victor Klee}

\begin{abstract}
We adapt a construction of Klee (1981) to find a packing of unit balls in $\ell_p$ ($1\leq p<\infty$) which is efficient in the sense that enlarging the radius of each ball to any $R>2^{1-1/p}$ covers the whole space.
We show that the value $2^{1-1/p}$ is optimal.
\end{abstract}

\maketitle

\section{Introduction}
The so-called simultaneous packing and covering constant of a convex body $C$ in Euclidean space is a certain measure of the efficiency of a packing or a covering by translates of $C$.
This notion was introduced in various equivalent forms by Rogers \cite{Rogers1950}, Ry\v{s}kov \cite{Ryvskov1974} and L.~Fejes~T\'oth \cite{Fejes1976}, and in the lattice case can be traced back to Delone \cite{Delone1937}.
Its study has recently been given renewed attention by Zong \cite{Zong2002a, Zong2002, Zong2002b, Zong2003, Zong2008} and others \cite{Henk2005, Schurmann2006}.
Important contributions to the non-lattice case have also been made by Linhardt \cite{Linhart1978}, B\"or\"oczky \cite{Boroczky1986}, and Doyle, Lagarias and Randall \cite{Doyle1992}.
Since this notion avoids the use of density, it can be used to study packings and coverings in hyperbolic spaces or infinite dimensional spaces.
In this paper we determine the exact value of this constant for the $\ell_p$ spaces where $1\leq p<\infty$.
The main ingredient in the proof is an adaptation of a construction of Klee \cite{Klee1981}.

\section{The simultaneous packing and covering constant}
Let $(X,\norm{\cdot})$ be any normed space.
Denote the closed ball with center $x\in X$ and radius $r$ by $B(x,r)$.
A subset $P\subseteq X$ is (the set of centers of) \emph{an $r$-packing} if the collection of balls $\{B(x,r): x\in P\}$ are pairwise disjoint.
Equivalently, $P$ is \emph{$2r$-dispersed}, i.e., $d(x,y)> 2r$ for all distinct $x, y\in P$.
For any $P\subseteq X$, define \[r(P):=\sup\{r: \text{$P$ is an $r$-packing}\}.\]

A subset $P\subseteq X$ is (the set of centers of) \emph{an $R$-covering} (or \emph{$R$-net}) if the collection of balls $\{B(x,R): x\in P\}$ cover $X$, i.e., $X=\bigcup_{x\in P} B(x,R)$.
For any $P\subseteq X$, define \[R(P):=\inf\{R:\text{$P$ is an $R$-covering}\}.\]
Then $R(P)$ is the supremum of the radii of balls disjoint from $P$:
\begin{equation}\label{one}
R(P)=\sup\{R: \text{for some $x\in X$, $B(x,R)\cap P=\emptyset$}\}.
\end{equation}
If $P$ is an $r$-packing, then $R(P)-r$ is the supremum of the radii of balls that are disjoint from $\bigcup_{p\in P} B(p,r)$, and if $P$ is an $R$-covering, then $R-r(P)$ is the supremum of the radii of balls that are contained in more than one of $B(p,R)$, $p\in P$ \cite{Fejes1976}.

\begin{definition}
 The \emph{simultaneous packing and covering constant} of (the unit ball of) $X$ is
\[\gamma(X):=\inf\{R(P): \text{$P$ is a $1$-packing}\}.\]
\end{definition}
We could also have used $1$-coverings to define this constant, as shown by the identity \[\gamma(X)^{-1}=\sup\{r(P): \text{$P$ is a $1$-covering}\}.\]

It is clear that $R(P)\geq 1$ for any $1$-packing $P$.
By Zorn's lemma there always exists a maximal $1$-packing, which is necessarily a $2$-covering.
Therefore, \[1\leq\gamma(X)\leq 2.\]
If $X$ is finite-dimensional, then $\gamma(X)$ is exactly the simultaneous packing and covering constant of the unit ball of $X$, as discussed in the introduction.

\section{The main theorem}
The main result of the paper concerns the case $X=\ell_p$, $1<p<\infty$, which we recall is the space of real sequences $x=(x_i)_{i\in\N}$ such that $\sum_{i=1}^\infty\abs{x_i}^p<\infty$ with norm
\[ \norm{x}_p:=\left(\sum_{i=1}^\infty\abs{x_i}^p\right)^{1/p}.\]
\begin{theorem}
For each $p\in(1,\infty)$, $\gamma(\ell_p)=2^{1-1/p}$.
\end{theorem}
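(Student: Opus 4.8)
The plan is to prove the two inequalities $\gamma(\ell_p)\le 2^{1-1/p}$ and $\gamma(\ell_p)\ge 2^{1-1/p}$ separately; write $c:=2^{1-1/p}$ and let $(e_n)$ be the unit vector basis of $\ell_p$. For the upper bound it suffices, by the definition of $\gamma$, to exhibit one $1$-packing $P\subseteq\ell_p$ with $R(P)\le c$. The natural first guess is a rescaled ``checkerboard'': let $D$ be the set of finitely supported integer sequences with even coordinate sum and put $P_0:=c\,D$; this is $2$-dispersed, its minimal distance in $\ell_p$ being $c\cdot 2^{1/p}=2$, attained by the vectors $c(e_i\pm e_j)$. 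But $P_0$ is useless as a covering, since the points $\tfrac c2(e_{n_1}+\dots+e_{n_k})$ lie at distance $2^{-1/p}k^{1/p}\to\infty$ from it: in infinite dimensions the deep holes of a translation-invariant packing run off to infinity. Repairing these holes is exactly the purpose of Klee's 1981 construction, and I would adapt it by building $P\supseteq P_0$ through a recursion over ``scales'' --- partition $\N$ into consecutive finite blocks and, at the $k$-th level, adjoin a sufficiently sparse family of auxiliary centres filling the deep holes created so far, each family chosen so that all pairwise distances stay strictly above $2$. One then verifies (i) that $P$ is still a $1$-packing and (ii) that $R(P)=c$, the deepest surviving holes being the rescaled coordinate points $c\,e_n$, which lie at distance exactly $c$ from $P$; both reduce to $\ell_p$-norm estimates that use the identity $\bigl\lVert\sum_{i\in F}e_i\bigr\rVert_p=\abs{F}^{1/p}$.

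For the lower bound, let $P$ be an arbitrary $1$-packing and suppose for contradiction that it is an $R$-covering with $R<c$. Translate so that $0\in P$, so $\norm{q}_p>2$ for all $q\in P\setminus\{0\}$. Probing at $c\,e_n$ --- which lies at distance $c>R$ from $0$, hence is covered by some $q_n\in P\setminus\{0\}$ --- and subtracting $\norm{c\,e_n-q_n}_p^p\le R^p$ from $\norm{q_n}_p^p>2^p$ yields $\abs{(q_n)_n}^p-\abs{c-(q_n)_n}^p>2^p-R^p$, which forces $R>2-c$; but no single coordinate probe does better, because $q_n$ may be placed near $2e_n$. To extract the full constant $c$ one must use that the $q_n$ are mutually $2$-dispersed \emph{and} that $P$ covers all of $\ell_p$: passing to a subsequence one may assume, by a gliding-hump argument, that $q_n=q^\ast+w_n$ up to a norm-small perturbation, with $q^\ast$ the weak limit and $(w_n)$ disjointly supported of norm tending to some $\tau$; since $\norm{q_n-q_m}_p>2$ one gets $2^{1/p}\tau\ge 2$, i.e.\ $\tau\ge c$, and feeding this back into the distance estimates --- run now over a cleverly chosen family of probe points, or through an iterated probing scheme using the found centres as new base points --- produces, for every $\epsi>0$, a point at distance $\ge c-\epsi$ from $P$, contradicting $R<c$. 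The midpoint and triangle estimates required here are the Clarkson inequalities of $\ell_p$, applied in the form suited to $p\ge 2$ or to $1<p<2$.

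The main obstacle is the construction behind the upper bound, which the introduction singles out as ``the main ingredient in the proof''. Its difficulty is the total absence of slack: the recursive insertion of auxiliary centres must keep every pairwise distance strictly above $2$ while simultaneously driving the covering radius all the way down to the exact value $c=2^{1-1/p}$, so the $\ell_p$ estimates at every scale have to be tight, and the bookkeeping that certifies the recursion actually yields a $c$-covering --- rather than merely holes that keep shrinking without ever reaching $c$ --- is the technical heart. On the lower-bound side the analogous subtlety is circumventing the ``$q_n\approx 2e_n$'' degeneracy, i.e.\ extracting the constant $c$ in place of the weaker $2-c$, which is precisely what forces the weak-compactness argument rather than a one-point probe.
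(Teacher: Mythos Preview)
Both halves of your sketch have genuine gaps, and in each case the paper's route is different from yours.

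For the lower bound, the paper does not probe at $c\,e_n$ or use Clarkson's inequalities. It invokes the Burlak--Rankin--Robertson result (Lemma~\ref{BRR}): an infinite subset of the unit ball of $\ell_p$ cannot be $\alpha$-dispersed for $\alpha>2^{1/p}$. Given a $1$-dispersed $P$, one takes a ball $B(o,r)$ disjoint from $P$ with $r$ close to $R(P)$ and shows that a slightly larger concentric ball must meet $P$ in infinitely many points --- for if not, one can slide the centre to some $\delta e_n$ and obtain a hole of radius $r+2\epsi>R(P)$. Applying Lemma~\ref{BRR} to that infinite $1$-dispersed set inside a ball of radius $\approx R(P)$ gives $R(P)\ge 2^{-1/p}$, hence $\gamma(\ell_p)\ge 2^{1-1/p}$ after rescaling. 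Your gliding-hump observation that the disjoint tails satisfy $\tau\ge c$ is essentially a restatement of that lemma, but you have not explained the ``feeding back'' step: knowing that the centres covering $c\,e_n$ have large disjointly supported tails does not by itself manufacture a point at distance $\ge c-\epsi$ from \emph{all} of $P$, and your single-coordinate probe, as you note, only yields $R>2-c$.

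For the upper bound, your outline --- start from a rescaled checkerboard in $\ell_p(\N)$ and plug its deep holes level by level --- is not the paper's construction, and you have left unspecified how the auxiliary centres are to be chosen so that all pairwise distances stay above $2$ while the covering radius actually reaches $c$. The paper's device is of a different kind: it builds the \emph{ambient space} together with the packing. One maintains countable sets $D_n$ and works in $\ell_p\bigl(\bigcup_n D_n\bigr)$; for every $x\in D_n$ the new centre is $x+e_x$, where $e_x$ is a \emph{fresh} basis vector indexed by the point $x$ itself, and $D_{n+1}$ is then taken dense in the still-uncovered part of the current space. Because the supports are disjoint by construction, the distance computations are exact --- for instance $\norm{(x+e_x)-(y+e_y)}_p^p=\norm{x-y}_p^p+2$ when $x,y$ lie in the same $D_n$ --- so the set is $2^{1/p}$-dispersed with no slack to manage, and the density of the $D_n$ gives $R(P)=1$ directly. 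There is no checkerboard, no block decomposition of $\N$, and the picture of ``surviving holes at $c\,e_n$'' does not match what actually happens.
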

In particular, if $p$ is close to $1$, then $\gamma(\ell_p)$ is close to $1$, which means there are very good packings of unit balls in $\ell_p$.
Perhaps more surprisingly, if $p$ is very large, $\gamma(\ell_p)$ is close to $2$, i.e., any packing by unit balls has large holes.

In the next section we use a result of Burlak, Rankin and Robertson \cite{Burlak1958} to show the lower bound $\gamma(\ell_p)\geq 2^{1-1/p}$.
It is more difficult to find good packings.
In Section~\ref{klee} we adapt a construction of Klee \cite{Klee1981} to give packings that demonstrate the upper bound $\gamma(\ell_p)\leq 2^{1-1/p}$.
In fact, Klee already obtained this bound  for $\ell_p(\kappa)$ where $\kappa$ is a regular cardinal such that $\kappa^{\aleph_0}=\kappa$.
In our case, $\kappa=\aleph_0$, and then his construction has to be modified substantially.

\section{The lower bound}
For a proof of the following packing property of $\ell_p$, see \cite{Burlak1958}, \cite{Kottman1970}, or \cite{Wells1975}.
\begin{lemma}\label{BRR}
If the unit ball of $\ell_p$ contains an infinite $\alpha$-dispersed set, then $\alpha\leq 2^{1/p}$.
\end{lemma}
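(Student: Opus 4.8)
The plan is to combine a diagonal argument with a ``gliding hump'' to reduce everything to the trivial estimate for disjointly supported vectors. An infinite $\alpha$-dispersed subset of the unit ball contains a sequence $x^{(1)},x^{(2)},\dots$ of distinct points with $\norm{x^{(k)}}_p\leq 1$ and $\norm{x^{(i)}-x^{(j)}}_p>\alpha$ for $i\neq j$, so it is enough to bound $\alpha$ for such a sequence. Since each coordinate sequence is bounded, a diagonal argument produces a subsequence along which every coordinate converges; the coordinatewise limit $x=(x_i)$ satisfies $\sum_{i=1}^N\abs{x_i}^p=\lim_k\sum_{i=1}^N\abs{x^{(k)}_i}^p\leq 1$ for every $N$, hence $x$ lies in the unit ball of $\ell_p$. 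Put $v^{(k)}:=x^{(k)}-x$, so that $v^{(k)}\to 0$ coordinatewise and $x^{(i)}-x^{(j)}=v^{(i)}-v^{(j)}$.

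The one nontrivial norm estimate is $\limsup_k\norm{v^{(k)}}_p\leq 1$. Given $\epsi>0$, choose $N$ with $\bigl(\sum_{i>N}\abs{x_i}^p\bigr)^{1/p}<\epsi$ and then $K$ with $\sum_{i\leq N}\abs{v^{(k)}_i}^p<\epsi^p$ for all $k\geq K$ (coordinatewise convergence); splitting the sum defining $\norm{v^{(k)}}_p$ at coordinate $N$ and applying $(a+b)^{1/p}\leq a^{1/p}+b^{1/p}$ once and the triangle inequality for $\ell_p$ once (on the tails) gives $\norm{v^{(k)}}_p<\epsi+\norm{x^{(k)}}_p+\epsi\leq 1+2\epsi$ for $k\geq K$. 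With this in hand, apply the standard gliding-hump procedure: passing to a further subsequence and relabelling, one obtains integers $0=n_0<n_1<n_2<\cdots$ such that the vectors $w^{(k)}$, defined as the restrictions of $v^{(k)}$ to the coordinates in $(n_{k-1},n_k]$, satisfy $\norm{v^{(k)}-w^{(k)}}_p\leq 2^{-k}$. These $w^{(k)}$ have pairwise disjoint supports and $\limsup_k\norm{w^{(k)}}_p\leq 1$.

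To finish, disjointness of supports yields the exact identity $\norm{w^{(i)}-w^{(j)}}_p^p=\norm{w^{(i)}}_p^p+\norm{w^{(j)}}_p^p$ for $i\neq j$, so
\[ \alpha<\norm{x^{(i)}-x^{(j)}}_p=\norm{v^{(i)}-v^{(j)}}_p\leq\bigl(\norm{w^{(i)}}_p^p+\norm{w^{(j)}}_p^p\bigr)^{1/p}+2^{-i}+2^{-j}. \]
Letting $j\to\infty$ and then $i\to\infty$, and using $\limsup_k\norm{w^{(k)}}_p^p\leq 1$, gives $\alpha\leq 2^{1/p}$. Note that this argument uses only coordinatewise convergence (no reflexivity), so it applies equally when $p=1$.

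I expect the main obstacle to be precisely the estimate $\limsup_k\norm{v^{(k)}}_p\leq 1$: the triangle inequality by itself only gives $\norm{v^{(k)}}_p\leq 2$, and one has to exploit that the ``mass'' of $x$ and the eventual ``humps'' of $v^{(k)}$ sit on essentially disjoint blocks of coordinates — which is exactly what the choices of $N$ and $K$ encode. Once that bound and the gliding-hump reduction are in place, the disjoint-support computation is immediate.
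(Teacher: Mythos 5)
Your argument is correct, and it is worth noting that the paper itself does not prove this lemma at all: it is quoted from the literature, with the proof deferred to Burlak--Rankin--Robertson, Kottman, and Wells--Williams, where it arises in the computation of the packing constant of $\ell_p$. Your proof is therefore a genuine addition rather than a variant of anything in the paper, and it follows the standard sequential route: a diagonal argument gives coordinatewise convergence $x^{(k)}\to x$ with $x$ in the unit ball, the head/tail splitting gives the key estimate $\limsup_k\norm{x^{(k)}-x}_p\leq 1$ (this is exactly the $\ell_p$-specific improvement over the trivial bound $2$, and your choice of $N$ and $K$ makes it rigorous), and the gliding hump replaces the differences $v^{(k)}$ by disjointly supported blocks $w^{(k)}$ up to errors $2^{-k}$, after which $\norm{w^{(i)}-w^{(j)}}_p^p=\norm{w^{(i)}}_p^p+\norm{w^{(j)}}_p^p$ finishes the estimate. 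The only cosmetic point is in the last step: since $\norm{w^{(k)}}_p$ need not converge, ``letting $j\to\infty$ and then $i\to\infty$'' should be read as taking $\liminf$'s (or limits along further subsequences), using $\liminf_k\norm{w^{(k)}}_p^p\leq\limsup_k\norm{w^{(k)}}_p^p\leq 1$; this is immediate and does not affect the conclusion. Your remark that only coordinatewise convergence is used (no reflexivity or weak compactness) is exactly what makes the argument valid for $p=1$ as well, which the lemma requires since the paper applies it for all $1\leq p<\infty$.
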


To prove  $\gamma(\ell_p)\geq 2^{1-1/p}$, it is sufficient to show the following.
\begin{proposition}
Let $P$ be a $1$-dispersed subset of $\ell_p$ where $1\leq p<\infty$.
Then $R(P)\geq 2^{-1/p}$.
\end{proposition}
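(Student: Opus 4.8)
The plan is to argue by contradiction. Suppose $R(P)<2^{-1/p}$, so that $P$ is an $R$-covering for some $R<2^{-1/p}$: every point of $\ell_p$ lies within distance $R$ of $P$. After translating I may assume some $p_0\in P$ equals the origin, so that every other point of $P$ has norm $>1$, and the points remain pairwise more than $1$ apart. I then probe the covering with points pushed a distance $2^{-1/p}$ from $p_0$ into a ``fresh'' coordinate direction, $y_n:=2^{-1/p}e_n$, and aim to show such a probe cannot be covered, contradicting $R<2^{-1/p}$.

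Two facts drive the argument. First, a per-point computation: for fixed $q\in\ell_p$ the coordinates of $q$ tend to $0$, so $\norm{2^{-1/p}e_n-q}_p^p\to\norm{q}_p^p+\tfrac12$ as $n\to\infty$; hence $\norm{y_n-q}\to(\norm{q}_p^p+\tfrac12)^{1/p}$, which exceeds $1$ (and so exceeds $2^{-1/p}$) whenever $\norm{q}>1$, and equals $2^{-1/p}$ only when $q=p_0$. In particular no fixed point of $P$ can cover infinitely many probes $y_n$, since $\norm{y_n-q}\to(\norm{q}_p^p+\tfrac12)^{1/p}>R$. Second, a finiteness fact from Lemma~\ref{BRR}: because $R<2^{-1/p}$, no ball of radius $R$ contains infinitely many points of $P$, as these would form an infinite $1$-dispersed subset of a ball of radius $R$, i.e.\ an infinite $(1/R)$-dispersed subset of the unit ball with $1/R>2^{1/p}$, which the Lemma forbids. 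Thus each $y_n$ is covered by one of only finitely many points of $P$; moreover any covering point $q$ of $y_n$ satisfies $\abs{q_n}^p-\abs{2^{-1/p}-q_n}^p=\norm{q}_p^p-\norm{y_n-q}_p^p>1-R^p>\tfrac12$, so $\abs{q_n}>2^{-1/p}$ and $q$ spends more than half of its norm$^p$ on coordinate $n$.

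Combining these, the covering points of the $y_n$ take infinitely many distinct values, yet each point of $P$ (lying in $B(p_0,2^{1-1/p})$, hence of bounded norm) can cover only finitely many of them by the mass estimate; so there are infinitely many distinct, pairwise $>1$-separated covering points clustered in $B(p_0,2^{1-1/p})$. The final, and hardest, step is to turn this into a contradiction, and this is the main obstacle I anticipate. The difficulty is the near-extremal scenario in which each covering point sits beyond its probe along the very same coordinate, mirroring the equality case $1=2^{1/p}\cdot 2^{-1/p}$ of the Lemma: then every single-coordinate probe can be covered, and merely blocking a family of probes is not yet contradictory, since the obstructing family is sparse and behaves like an orthogonal set. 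To break this I expect to replace $y_n$ by a probe spread over many fresh high coordinates, $2^{-1/p}m^{-1/p}\sum_{j=1}^m e_{n_j}$, which tends weakly to $p_0$; a covering point of such a probe must still carry more than half its mass on the probe's fresh block, and feeding the resulting infinite, bounded, $1$-dispersed family of covering points into the reflexivity of $\ell_p$ and the weak-limit ($\ell_p$ asymptotic-orthogonality) mechanism underlying Lemma~\ref{BRR}, together with the full strength of the global covering and the strict inequalities $\norm{q-p_0}>1$, should be incompatible with covering the neighbourhoods of these spread probes. I regard this passage—upgrading per-probe finiteness to a single genuinely uncovered probe—as where the real work lies, and as the place where both the Lemma and the strictness of the dispersal hypothesis are indispensable.
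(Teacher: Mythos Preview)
Your argument is incomplete, and you have correctly located the gap yourself. After establishing that infinitely many points of $P$ lie in $B(p_0,2^{1-1/p})$, Lemma~\ref{BRR} only yields $2^{1/p-1}\leq 2^{1/p}$, which is vacuous. The proposed rescue via spread probes, reflexivity, and asymptotic orthogonality is a sketch of hopes rather than an argument; as stated it would amount to proving a sharpening of the Lemma, which you have not done.

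The difficulty stems from two choices: you centre at a \emph{point of $P$}, and you probe at the \emph{fixed} distance $2^{-1/p}$. This forces the cluster of covering points into a ball of radius roughly $2\cdot 2^{-1/p}$, too large for the Lemma to bite. The paper avoids this by centring at a near-maximal \emph{hole} and using a \emph{vanishing} probe distance. Take $r=R(P)-\epsi$ and translate so that $B(o,r)\cap P=\emptyset$; thus $\norm{x}_p>r$ for all $x\in P$. Choose $\delta$ with $r^p+\delta^p=(r+2\epsi)^p$, so $\delta\to 0$ as $\epsi\to 0$, and set $Q:=P\cap B(o,r+\delta+2\epsi)$. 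If $Q$ were finite, then your own coordinate computation $\norm{x-\delta e_n}_p^p\to\norm{x}_p^p+\delta^p>r^p+\delta^p=(r+2\epsi)^p$ lets a single large $n$ push every $x\in Q$ beyond distance $r+2\epsi$ from $\delta e_n$, while points of $P\setminus Q$ are beyond that distance already by the triangle inequality. That would exhibit a hole of radius $r+2\epsi>R(P)$, a contradiction. Hence $Q$ is infinite, $1$-dispersed, and sits in a ball of radius $r+\delta+2\epsi$, so the Lemma gives $r+\delta+2\epsi\geq 2^{-1/p}$; letting $\epsi\to 0$ finishes.

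The two missing moves are thus: (i) centre at a hole, so that \emph{failure to enlarge the hole} is what forces an infinite cluster; and (ii) probe at distance $\delta\to 0$, so that cluster lives in a ball of radius $R(P)+o(1)$ rather than $2R(P)$, making Lemma~\ref{BRR} immediately decisive with no further work.
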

\begin{proof}
Let $0<\epsi<R(P)$.
Set $r:=R(P)-\epsi$ and $\delta:=((r+2\epsi)^p-r^p)^{1/p}$.
By \eqref{one} there exists $c\in\ell_p$ such that $B(c,r)\cap P=\emptyset$.
Translate $P$ by $-c$ so that we may assume without loss of generality that $c=o$.
Thus $\norm{x}>r$ for all $x\in P$.

We claim that $Q:=B(o,r+\delta+2\epsi)\cap P$ is infinite.
Suppose to the contrary that $Q$ is finite.
As usual, we denote by $e_n$ the sequence which is $1$ in position $n$ and $0$ in all other positions.
For any $n\in\N$ and $x\in Q$,
\[ \norm{x-\delta e_n}_p^p = \norm{x}_p^p-\abs{x_n}^p+\abs{x_n-\delta}^p.\]
Therefore,
\[ \lim_{n\to\infty}\norm{x-\delta e_n}_p^p=\norm{x}_p^p+\delta^p > r^p+\delta^p=(r+2\epsi)^p.\]
Since $Q$ is finite, there exists $n\in\N$ such that for all $x\in Q$,
$\norm{x-\delta e_n}>r+2\epsi$.
On the other hand, for any $x\in P\setminus Q$, $\norm{x}> r+\delta+2\epsi$, and then the triangle inequality gives $\norm{x-\delta e_n}> r+2\epsi$.
Therefore, $B(\delta e_n,r+2\epsi)\cap P=\emptyset$, which gives
$R(P)\geq r+2\epsi$, a contradiction.

Thus $Q$ is infinite, and by Lemma~\ref{BRR},
\[R(P)+\delta+\epsi=r+\delta+2\epsi\geq 2^{-1/p}.\]
By letting $\epsi\to 0$, we obtain $R(P)\geq 2^{-1/p}$, as required.
\end{proof}

\section{Constructing an optimal packing}\label{klee}
To prove the upper bound $\gamma(\ell_p)\leq 2^{1-1/p}$, it is sufficient to show the following.
\begin{proposition}
For any $1\leq p<\infty$ there exists a $2^{1/p}$-dispersed set $P\subseteq\ell_p$ such that $R(P)=1$.
\end{proposition}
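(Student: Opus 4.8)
The plan is to build $P$ greedily: start from nothing, and repeatedly ``patch a hole'' near a point of a fixed dense sequence by adjoining one new point that protrudes a little along a previously unused coordinate, so that this point is automatically far from everything chosen earlier. Before constructing anything I would note that only the inequality $R(P)\le 1$ requires a construction. If $P$ is $2^{1/p}$-dispersed then $2^{-1/p}P$ is $1$-dispersed, so the preceding Proposition gives $R(2^{-1/p}P)\ge 2^{-1/p}$; since $R(\lambda P)=\lambda R(P)$ for $\lambda>0$ (immediate from \eqref{one}), this yields $R(P)\ge 1$ for free. As $R(P)=\sup_{x\in\ell_p}d(x,P)$, again by \eqref{one}, it remains to exhibit a $2^{1/p}$-dispersed set $P\subseteq\ell_p$ with $d(x,P)\le 1$ for every $x$.

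For the construction, fix a sequence $(x_j)_{j\in\N}$ of \emph{finitely supported} vectors dense in $\ell_p$, fix $\epsi_i\searrow 0$, and enumerate the pairs $(j,i)$. Define finite sets $\emptyset=P_0\subseteq P_1\subseteq\cdots$, each consisting of finitely supported vectors, as follows: at the step handling $(j,i)$, do nothing if the current set already contains a point within $1+\epsi_i$ of $x_j$; otherwise choose $\delta\in(0,\epsi_i)$ and an index $m$ outside the finite set $\supp{x_j}\cup\bigcup_{q}\supp{q}$ (the union over the points $q$ of the current set), and adjoin $x_j+(1+\delta)e_m$. Put $P:=\bigcup_t P_t$. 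Keeping every chosen point finitely supported is exactly what guarantees a fresh coordinate $m$ is always available, and this bookkeeping is the one delicate ingredient; it is presumably the place where Klee's construction for $\ell_p(\kappa)$, which uses $\kappa^{\aleph_0}=\kappa$, must be reorganised to deal with $\kappa=\aleph_0$.

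Two verifications remain, both short. \emph{Covering:} after the step handling $(j,i)$ the set $P$ contains a point within $1+\epsi_i$ of $x_j$ (either one was already present, or the adjoined point $x_j+(1+\delta)e_m$ is at distance $1+\delta<1+\epsi_i$), so $d(x_j,P)\le 1+\epsi_i$ for all $i$, hence $d(x_j,P)\le 1$; density of $(x_j)$ then gives $d(x,P)\le 1$ for all $x\in\ell_p$. \emph{Dispersion:} it is enough to check each adjoined point against those present at the moment of its adjunction. When $x_j+(1+\delta)e_m$ is adjoined we are in the second case, so $\norm{x_j-q}_p\ge 1$ for every point $q$ then present; since $m\notin\supp{x_j}\cup\supp{q}$, the $m$-th coordinate of $x_j-q$ vanishes, and therefore
\[
\norm{x_j+(1+\delta)e_m-q}_p^p=\norm{x_j-q}_p^p+(1+\delta)^p\ge 1+(1+\delta)^p>2 .
\]
Thus $P$ is $2^{1/p}$-dispersed. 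Combining this with $d(x,P)\le 1$ and the reduction gives $R(P)=1$.

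The substantive content is the last display: the geometry of a ``hole'' of radius $\ge 1$ around a finitely supported point, together with orthogonality of the new spike, converts the hole condition $\norm{x_j-q}_p\ge 1$ into a dispersion bound exceeding $2^{1/p}$. So I do not expect a genuinely hard estimate anywhere; the only real obstacle is the organisational one of arranging the recursion so that all chosen points stay finitely supported, which is what makes fresh coordinates perpetually available.
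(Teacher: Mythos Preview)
Your proof is correct and rests on the same geometric mechanism as the paper's: adjoin each new centre along a previously unused coordinate, so that the $p$-th-power additivity $\norm{a+b}_p^p=\norm{a}_p^p+\norm{b}_p^p$ for disjointly supported $a,b$ converts the hole condition $\norm{x_j-q}_p\ge 1$ into the dispersion bound $>2^{1/p}$. The organisation is different, however. The paper builds the index set along with $P$: it alternates between choosing a countable dense set $D_n$ in the currently uncovered part of $\ell_p(\bigcup_{i<n}D_i)$ and then adjoining $x+e_x$ for every $x\in D_n$, using the point $x$ itself as the fresh coordinate; dispersion is verified by a same-level/different-level case split. Your single-pass greedy scheme over a fixed dense sequence of finitely supported vectors with shrinking tolerances is more direct and avoids the auxiliary spaces $\ell_p(\bigcup_i D_i)$; your explicit reduction of $R(P)\ge 1$ to the preceding Proposition is also a shortcut the paper leaves implicit. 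The paper's layered construction, in return, makes the bookkeeping of ``fresh'' coordinates entirely automatic (each $D_n$ is disjoint from the earlier ones by design), whereas you must track finite supports by hand.
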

\begin{proof}
We recursively construct the set $P$ together with the space, which in the end is isometric to $\ell_p$.

If $A$ is any set, we denote by $\ell_p(A)$ the normed space of all real-valued functions $f$ on $A$ with countable support $\supp{f}:=\{a\in A: f(a)\neq 0\}$ such that
$\sum_{a\in \supp{f}}\abs{f(a)}^{p}<\infty$,
and with norm \[\norm{f}_p:=\Bigl(\sum_{a\in \supp{f}}\abs{f(a)}^{p}\Bigr)^{1/p}.\]
Thus $\ell_p=\ell_p(\N)$ is isometric to $\ell_p(A)$ if $A$ is countably infinite.
For any $a\in A$, let $e_a$ be the function on $A$ such that $e_a(a)=1$ and $e_a(b)=0$ for all $b\in A$, $b\neq a$.
If $A\subseteq A'$, then we consider $\ell_p(A)$ to be a subspace of $\ell_p(A')$ in the natural way.

We construct two sequences of countable sets $P_n$ and $D_n$.
Let $P_1=\emptyset$ and $D_1=\{0\}=\ell_p(\emptyset)$.
If $P_1,\dots,P_n$ and $D_1,\dots,D_n$ have been constructed for some $n\geq 1$, let \[P_{n+1}:=\{x+e_x:x\in D_n\}\subseteq\ell_p\Bigl(\bigcup_{i=1}^n D_i\Bigr),\]
and let $D_{n+1}$ be a countable dense subset of
\[\ell_p\Bigl(\bigcup_{i=1}^{n} D_i\Bigr)\setminus\bigcup\Bigl\{B(x,1): x\in\bigcup_{i=1}^{n+1}P_i\Bigr\}.\]
By the definition of $P_{n+1}$ it follows that $D_k\subseteq\bigcup_{x\in P_{k+1}}B(x,1)$ for each $k=1,\dots,n$, hence $D_{n+1}$ is disjoint from $\bigcup_{i=1}^n D_i$.
It follows that the $P_n$ are also pairwise disjoint.

Let $P:=\bigcup_{n\in\N}P_n$.
Then $P$ is a subset of the space $\ell_p(\bigcup_{n\in\N}D_n)$, which is isometric to $\ell_p$ (note that already $D_2$ is infinite).
We now show that $P$ is $2^{1/p}$-dispersed and is a $(1+\epsi)$-covering for all $\epsi>0$. 

Choose two arbitrary elements $x+e_x, y+e_y\in P$, where $x\in D_n$ and $y\in D_m$, $x\neq y$, and $n\leq m$.
Since $\supp{x}, \supp{y}\subseteq\bigcup_{i=1}^{m-1} D_i$, which is disjoint from $D_m$, it follows that $\supp{x-y}$ and $\supp{e_y}=\{y\}$ are disjoint.
We distinguish between two cases.

If $n=m$, then $\supp{e_x}=\{x\}$ is also disjoint from $\supp{x-y}$ and $\supp{e_y}$, hence
\[ \norm{(x+e_x)-(y+e_y)}_p^p=\norm{x-y+e_x-e_y}_p^p=\norm{x-y}_p^p+1+1>2.\]

In the second case, $n<m$.
Since $y\in D_m$, and $x+e_x\in P_{n+1}$, it follows that $y\notin B(x+e_x,1)$, hence $\norm{x+e_x-y}_p^p>1$.
Since $\supp{x+e_x-y}$ and $\supp{e_y}$ are now disjoint,
\[ \norm{(x+e_x)-(y+e_y)}_p^p=1+\norm{x+e_x-y}_p^p>1+1.\]
It follows that $P$ is $2^{1/p}$-dispersed.

Let $\epsi>0$ and choose an arbitrary $x\in\ell_p(\bigcup_{n\in\N} D_n)$.
Choose $N\in\N$ large enough such that $\norm{x-y}_p<\epsi/2$ for some $y\in\ell_p(\bigcup_{i=1}^{N-1} D_i)$.
If $y\in\bigcup\{B(z,1):z\in\bigcup_{i=1}^{N}P_i\}$, then for some $z\in\bigcup_{i=1}^{N}P_i$, \[\norm{x-z}_p\leq\norm{x-y}_p+\norm{y-z}_p<1+\epsi/2.\]
If on the other hand $y\notin\bigcup\{B(z,1):z\in\bigcup_{i=1}^{N}P_i\}$, then there exists $z\in D_N$ such that $\norm{y-z}_p^p<(1+\epsi/2)^p-1$.
Note that $z+e_z\in P_{N+1}$.
Since $z\in D_N$ and $y,z\in\ell_p(\bigcup_{i=1}^{N}D_i)$, $\supp{e_z}=\{z\}$ is disjoint from $\supp{y-z}$.
Thus
\[ \norm{z+e_z-y}_p^p=1+\norm{z-y}_p^p<(1+\epsi/2)^p,\]
and by the triangle inequality, $\norm{z+e_z-x}<1+\epsi$.
It follows that $P$ is a $(1+\epsi)$-covering. 
\end{proof}


\end{document}